\numberwithin{equation}{section}
\newtheorem{thm}{Theorem}[section]
\newtheorem{cor}[thm]{Corollary}
\newtheorem{prop}[thm]{Proposition}
\newtheorem{lem}[thm]{Lemma}
\theoremstyle{remark}
\theoremstyle{definition}
\theoremstyle{remark}
\newtheorem{remark}[thm]{Remark}
\newtheorem{conj}[thm]{Conjecture}
\newcommand{\R}{\mathbb{R}}
\newcommand{\Z}{\mathbb{Z}}
\newcommand{\N}{\mathbb{N}}
\newcommand{\T}{\mathbb{T}}
\newcommand{\id}{\mathrm{id}}
\newcommand{\OP}{\operatorname}
\begin{document}

\title[Hamiltonian fixed points on symplectically aspherical manifolds]{The number of Hamiltonian fixed points on symplectically aspherical manifolds}

\author{Georgios Dimitroglou Rizell}
\author{Roman Golovko}

\begin{abstract}
We show that a generic Hamiltonian diffeomorphism on a closed symplectic manifold which is symplectically aspherical has at least the stable Morse number of fixed points -- this is in line with a conjecture by Arnold. 
\end{abstract}

\address{Uppsala University, Sweden}
\email{georgios.dimitroglou@math.uu.se}
\urladdr{http://www.dimitroglou.name/}
\address{Universit\'{e} libre de Bruxelles, Belgium} \email{rgolovko@ulb.ac.be}
\urladdr{https://sites.google.com/site/ragolovko/}

\date{\today}
\subjclass[2010]{Primary 53D12; Secondary 53D42.}

\keywords{fixed points, Hamiltonian diffeomorphism, symplectically aspherical manifold, stable Morse number, strong Arnold conjecture}

\maketitle

\section{Introduction}
In the 1960's, Arnold announced several fruitful conjectures in symplectic topology concerning the number of fixed point of a Hamiltonian diffeomorphism in both the absolute case (concerning periodic Hamiltonian orbits) and the relative case (concerning Hamiltonian chords on a Lagrangian submanifold). These questions originate in, among others, questions in celestial mechanics. The strongest form of Arnold's conjecture for a closed symplectic manifold can be stated as follows:
\begin{conj}[Appendix 9 in \cite{MMOCM} \& p. 284 in \cite{AP}]
\label{conj}
The number of fixed points of an arbitrary (resp. generic) Hamiltonian
diffeomorphism of a closed symplectic manifold $(X,\omega)$ is
greater or equal than the number of critical points of a smooth
(resp. Morse) function $X \to \R$.
\end{conj}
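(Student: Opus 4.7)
The statement as phrased is a conjecture whose strongest form remains open; my strategy would be to extract a lower bound on $\#\mathrm{Fix}(\phi_H)$ from Hamiltonian Floer homology — whose generators are by construction the fixed points of $\phi_H$ — and then to upgrade this Floer-homological output to a genuinely Morse-theoretic bound on $X$.

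The first step is to set up Floer theory. For a generic non-degenerate $H$, form the Floer chain complex $CF_*(H)$ whose generators are contractible $1$-periodic orbits of the Hamiltonian vector field (in bijection with $\mathrm{Fix}(\phi_H)$) and whose differential counts pseudo-holomorphic Floer cylinders; in the presence of non-trivial $\pi_2(X)$ one needs virtual perturbation machinery such as Kuranishi structures or polyfolds to achieve transversality. The homology of $CF_*(H)$ is canonically isomorphic to the singular (or quantum) homology of $X$, which yields
\begin{equation*}
\#\mathrm{Fix}(\phi_H) \;\geq\; \sum_k \dim_{\F} H_k(X;\F)
\end{equation*}
for any field $\F$, giving the generic half of the conjecture with the total Betti number in place of the Morse number on the right. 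The arbitrary-smooth case would then be reduced to the generic one by a standard limiting argument: perturb $H$ to a $C^2$-close non-degenerate $H'$, apply the generic bound, and account for the way degenerate fixed points split via their local Conley–Zehnder data.

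The main obstacle — and the reason the conjecture remains open in general — is the passage from the sum of Betti numbers to the Morse number of $X$, which can be strictly larger when $X$ carries torsion in homology, nontrivial cup-length, or nontrivial Whitehead/simple-homotopy invariants. Closing this gap requires exploiting chain-level information beyond the rank of Floer homology: Lusternik–Schnirelmann estimates via cup-length in quantum cohomology (giving bounds by $\mathrm{cat}(X)+1$), persistence/barcode invariants of the action-filtered Floer complex, and $A_\infty$ or Steenrod-type refinements are the natural candidates, and I would attempt to assemble these into an enhanced invariant comparable to the one that computes the unstable Morse number.

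The hardest point, and the one I expect to be unable to resolve by current technology, is producing from the Floer complex of an arbitrary $H$ on $X$ an actual Morse complex on $X$ itself — no such procedure is known. A well-tested workaround is to stabilize by passing to $X \times T^*N$ and target the \emph{stable} Morse number, which is precisely the route realized in this paper under the symplectic asphericity hypothesis. A direct proof of the conjecture as stated would require a genuinely new mechanism linking the Hamiltonian dynamics of $\phi_H$ to the unstable Morse theory of $X$, for instance an effective chain-level rigidification of Floer theory that preserves the minimum generator count.
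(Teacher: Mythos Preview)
The statement is a \emph{conjecture}, and the paper does not prove it; it is stated precisely as an open problem, and the paper's actual contribution is the weaker Corollary~\ref{cor:main} (the stable Morse number bound under symplectic asphericity). You recognise this clearly, and your discussion is not a proof attempt so much as an accurate survey of the known landscape: the Floer-theoretic Betti-number bound, the gap between Betti sums and the Morse number, and the observation that the paper sidesteps this gap by targeting $\OP{stabMorse}(X)$ under an asphericity hypothesis. In that sense there is nothing to compare against a ``paper's proof'' --- there isn't one --- and your assessment of the state of affairs is essentially correct and in line with the paper's own Section~\ref{sec:previous}.

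Two small corrections are worth flagging. First, the stabilisation the paper invokes is $X\times\R^k$ (via Damian's Theorem~\ref{resdamadded}), not $X\times T^*N$; the mechanism is that the Floer complex of contractible chords is shown, by Sullivan's bifurcation analysis, to be \emph{simple homotopy equivalent} to the Morse complex of $L$, and Damian's result then realises any such complex as the Morse complex of a function on $L\times\R^k$ that is almost quadratic at infinity. Second, your sketch for the degenerate case --- perturb to a nearby non-degenerate $H'$ and track how fixed points split --- does not by itself recover the $\OP{Crit}(X)$ bound, only a Betti-number or local-index bound; the known $\OP{Crit}(X)$ result in the aspherical case (Rudyak--Oprea) goes instead through Lusternik--Schnirelmann category arguments applied directly to the action functional, which you allude to later but should not conflate with the perturbation approach.
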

The minimal number of critical points of a Morse function (resp. stable Morse function) on a closed smooth manifold $Χ$ is referred to as the \emph{Morse number} (resp. \emph{stable Morse number}), and we denote it by $\OP{Morse}(X)$ (resp. $\OP{stabMorse}(X)$); see Section \ref{morsenumber} for more details. Similarly, the minimal number of critical points of an arbitrary smooth function is denoted by $\OP{Crit}(X)$.

Many strong results have been established in line with the above
conjecture; a brief recollection is given in Section
\ref{sec:previous}. The goal of this paper is to provide a
sharpening of the previously known results in certain special cases
of particularly well-behaved symplectic manifolds.

Most of the previously known results have been established by using Floer homology. This theory was originally invented by Floer \cite{MTFFPOSD} in order to study precisely this problem, and it is based upon Gromov's theory of pseudoholomorphic curves \cite{PCISM}.

The results in this paper are also proven using a version of Floer homology, namely a version for pairs of Lagrangian submanifolds with coefficients twisted in the fundamental group. The details of our setup are given in Section \ref{bifurcationanalysissection}. The Lagrangian intersection Floer homology was invented by Floer in \cite{MTFLI} to study the relative version of the problem. We also refer to \cite{FOOO1}, \cite{FOOO2} by Fukaya--Oh--Ohta--Ono for the construction of Floer homology in a much more general setting than the one considered here.

In \cite{DRGSM}, the authors obtained the following result in the relative setting, i.e.~concerning
the intersection points of Lagrangian submanifolds, under the strong assumption that the Lagrangian submanifolds
are exact: The number of intersection points of an exact Lagrangian submanifold
$L \subset (X,\omega=d\eta)$ and $\phi(L)$ for a generic Hamiltonian
diffeomorphism $\phi$ is bounded from below by the \emph{stable}
Morse number of $L,$ under the assumptions that $L$ is relatively
spin and has vanishing Maslov class.

In this paper, we prove the following result: 
\begin{thm}
\label{thm:main} Let $L\subset (X,\omega)$ be a relative spin
Lagrangian submanifold of a symplectic manifold with bounded
geometry at infinity, which satisfies the condition that both the
Maslov class and the symplectic area class vanish on $\pi_2(X,L)$.
Then
$$ L \pitchfork \phi^1_{H_t}(L) \ge \OP{stabMorse(L)}$$
for any generic Hamiltonian diffeomorphism $\phi^1_{H_t}$.
\end{thm}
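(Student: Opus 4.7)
The plan is to set up a version of Lagrangian Floer homology for the pair $(L, \phi^1_{H_t}(L))$ whose underlying chain complex is a free module over $\Z[\pi_1(L)]$ generated by the (transverse) intersection points, and then to show that the rank of this complex admits the desired lower bound. Coefficients in the group ring $\Z[\pi_1(L)]$ rather than in a Novikov completion are available precisely because the symplectic area class vanishes on $\pi_2(X,L)$: the action functional lifts to a single-valued function on the $\pi_1(L)$-cover of the space of paths from $L$ to itself, so one lifts each intersection point to this cover to obtain the generators. The vanishing of the Maslov class provides a $\Z$-grading, the relative spin hypothesis orients the strip moduli spaces, and the bounded geometry at infinity together with the asphericity assumption rules out both disk and sphere bubbling, yielding a well-defined differential with $d^2=0$.

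The next step is to prove that the chain homotopy type of $CF_*(L,\phi^1_{H_t}(L))$ as a complex of free $\Z[\pi_1(L)]$-modules is invariant under generic Hamiltonian isotopies. This is the content of the bifurcation analysis in Section \ref{bifurcationanalysissection}: for a generic one-parameter family $\{H_t^s\}_{s\in[0,1]}$, the pair $(L,\phi^1_{H_t^s}(L))$ undergoes only elementary local modifications, namely handle slides and birth/death transitions of canceling pairs of intersection points. Each such move corresponds algebraically to an elementary operation on the complex (row operations on the differential matrix, or the insertion/deletion of an acyclic summand of the form $\Z[\pi_1(L)]\xrightarrow{\mathrm{id}}\Z[\pi_1(L)]$) that preserves the \emph{simple} homotopy type over $\Z[\pi_1(L)]$. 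The vanishing of $\omega$ on $\pi_2(X,L)$ ensures that the handle-slide coefficients lie in $\Z[\pi_1(L)]$ itself, with no Novikov tail appearing from boundary disk classes.

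Finally, to pass to the stable Morse number, we compare with the Morse complex: for a $C^2$-small time-independent Hamiltonian $H=\varepsilon f$ with $f$ a Morse function on $L$, the intersection points of $L$ with $\phi^1_H(L)$ are the critical points of $f$, and the Floer differential reduces to the Morse differential on $L$ with local system $\Z[\pi_1(L)]$, i.e.~to the cellular complex of the universal cover $\widetilde{L}$. By the invariance from the previous step, $CF_*(L,\phi^1_{H_t}(L))$ is simply homotopy equivalent over $\Z[\pi_1(L)]$ to this complex, so its number of generators is bounded below by $\OP{stabMorse}(L)$; this is the algebraic input used in the exact case of \cite{DRGSM}, and it carries over once the complex is set up in the present generality.

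The principal obstacle is the bifurcation analysis itself in the non-exact setting: one must verify that, for generic $s$, the one-parameter moduli spaces of Floer strips governing handle slides and birth/deaths remain compact (no energy leakage to bubbles despite non-exactness of $\omega$), and that the associated matrix coefficients genuinely lie in $\Z[\pi_1(L)]$. Both are delivered by the asphericity hypothesis, but the compactness and transversality analysis must be executed carefully; once in place, the algebraic count from \cite{DRGSM} applies verbatim.
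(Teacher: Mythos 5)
Your overall strategy --- a Floer complex over $\Z[\pi_1(L)]$, invariance of its simple homotopy type via Sullivan-style bifurcation analysis, comparison with the Morse complex for a $C^2$-small Hamiltonian, and the realization theorem of Damian to convert the simple homotopy class into an actual stable Morse function --- is the same as the paper's. But there is a genuine gap at the very first step, and it is precisely the point the paper singles out as the new difficulty relative to the exact case of \cite{DRGSM}. When $L$ is not simply connected, the intersection points of $L$ and $\phi^1_{H_t}(L)$ correspond to Hamiltonian chords lying in \emph{different} components of the path space $\Pi(X,L,L)$. The hypothesis that $\omega$ and the Maslov class vanish on $\pi_2(X,L)$ only controls the \emph{contractible} component: there one has capping discs, hence a single-valued action and a canonical $\Z$-grading. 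For a non-contractible component $\mathfrak{p}$, two strips between the same pair of chords differ by a loop in $\mathfrak{p}$ (a cylinder with boundary on $L$), whose $\omega$-period is not controlled by the hypothesis on $\pi_2(X,L)$; so your assertion that ``the action functional lifts to a single-valued function on the $\pi_1(L)$-cover of the space of paths'' fails there, and with it the action filtration and the energy bounds for the differential --- on those summands one would in general need Novikov-type coefficients, and the identification with the Morse complex breaks down.

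The fix, as in the paper, is to work only with the subcomplex generated by contractible chords, which still yields the lower bound since it merely undercounts intersection points. This forces two further steps that your proposal omits: (i) ``contractible'' is a property of the Hamiltonian chord, not of the bare intersection point, so one must retain the Hamiltonian-chord formulation and use the naturality transformation (Proposition \ref{prp:naturality}) to translate Sullivan's bifurcation analysis, which is formulated for honest pseudoholomorphic strips, into this setting and back; and (ii) one must check that the bifurcation moves respect the decomposition by path components --- e.g.\ that a handle-slide strip with one contractible asymptotic has \emph{both} asymptotics contractible, and that birth/death pairs appear within a single component --- so that the restricted complex of contractible chords is itself carried through the stabilizations and simple isomorphisms. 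Your $C^2$-small comparison is then a statement about this subcomplex (all chords of a $C^2$-small Hamiltonian are contractible), after which the concluding appeal to Damian's theorem goes through as you describe.
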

\begin{remark}
\label{rem} In the case of a simply connected manifold $M$ of
dimension either $\dim M=1,2$ or $\dim M \ge 6,$ the stable Morse
number is equal to the Morse number. Namely, in these cases the
result of Smale \cite{OTSOM} shows that, moreover, the Morse number
is equal to the minimal number of generators of a complex over $\Z$
whose homology is isomorphic to $H_\bullet(M).$ In the non-simply
connected case, there are counter examples due to Damian
\cite{OTSMNOACM}.
\end{remark}

In the case when $X=T^*M$ is a cotangent bundle equipped with the canonical symplectic form and when $L$ is the zero section, this was proven by Laudenbach--Sikorav in \cite{PERSISTANCE}. When $L \subset (X,d\lambda)$ is exact (and hence $X$ is necessarily open), together with the assumptions of the above theorem, the result was proven by the authors in \cite[Theorem 1.3]{DRGSM}. In order to see the difference between Theorem~\ref{thm:main} and the result proven in \cite{DRGSM}, observe that $L \subset (X,\omega=d\eta)$ being exact is a  very restrictive condition since, in particular, $X$ then necessarily is an open manifold.

Observe that, in the case when $L$ in addition is simply connected and of dimension at least $6$, the result can be seen to follow from Floer's original lower bound on the number of intersection points in terms of the generators of the homology of $L$ in combination with Smale's result in Remark \ref{rem} above.

In this paper our main goal concerns the original conjecture due to Arnold concerning fixed points of Hamiltonian
diffeomorphisms. In certain settings, it will turn out that we can
use the above result in order to obtain a strong partial answer.

We say that a symplectic manifold $(X,\omega)$ is
\emph{symplectically aspherical} if both the first Chern class and
the symplectic area class vanish on the elements of $\pi_2(X)$. A
symplectic manifold $(X,\omega)$ is said to be \emph{symplectically
atoroidal} under the assumption that these classes vanish when
pulled back to any map $\T^2 \to X$ from a torus. In particular,
since there is a map $\T^2 \to S^2$ of degree one, every
symplectically atoroidal manifold is symplectically aspherical.

Using Floer's approach to study Hamiltonian diffeomorphisms on a
closed symplectic manifold $(X,\omega)$ by studying Lagrangian
intersections of the diagonal $\Delta \subset (X \times
X,\omega\oplus(-\omega))$, we get the following result for
symplectically aspherical closed symplectic manifolds.
\begin{cor}
\label{cor:main}
Let $(X,\omega)$ be a closed symplectic manifold which is symplectically aspherical. Then, any generic Hamiltonian diffeomorphism $\phi^1_{H_t}$ of $X$ has at least $\OP{stabMorse(X)}$ number of fixed points.
\end{cor}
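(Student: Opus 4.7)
The plan is to apply Theorem~\ref{thm:main} to the diagonal $\Delta \subset (X \times X,\omega \oplus (-\omega))$, using the classical device that converts Hamiltonian fixed points into Lagrangian intersections. Given a time-dependent Hamiltonian $H_t$ on $X$, the lifted Hamiltonian $\tilde H_t(x,y) := H_t(y)$ on $X \times X$ generates the flow $(\mathrm{id},\phi^t_{H_t})$ with respect to $\omega \oplus (-\omega)$, so that $\phi^1_{\tilde H_t}(\Delta) = \mathrm{graph}(\phi^1_{H_t})$. The fixed points of $\phi^1_{H_t}$ are therefore in bijection with $\Delta \cap \phi^1_{\tilde H_t}(\Delta)$, and this intersection is transverse if and only if every fixed point of $\phi^1_{H_t}$ is non-degenerate, which is a generic condition on $H_t$.

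Next I would verify the hypotheses of Theorem~\ref{thm:main} for $\Delta$. Since $X$ is closed, so is $X \times X$, and bounded geometry at infinity is automatic; moreover $\OP{stabMorse}(\Delta) = \OP{stabMorse}(X)$ since $\Delta \cong X$ as smooth manifolds. For relative spinness, the class $\OP{pr}_1^* w_2(X) \in H^2(X \times X;\Z/2)$ pulls back to $w_2(\Delta) = w_2(X)$ under the diagonal inclusion, so $\Delta$ is relatively spin. The main topological input is the vanishing of the Maslov class and the symplectic area class on $\pi_2(X \times X,\Delta)$. For this I would use the standard folding construction: given any disk $u = (u_1,u_2) \co (D^2,\partial D^2) \to (X \times X,\Delta)$, the condition $u_1|_{\partial D^2} = u_2|_{\partial D^2}$ allows one to glue $u_1$ and the orientation-reversal of $u_2$ along their common boundary into a sphere $v \co S^2 \to X$ satisfying
$$\int_{S^2} v^* \omega = \int_{D^2} u^*\bigl(\omega \oplus (-\omega)\bigr) \quad \text{and} \quad \mu_\Delta(u) = 2\langle c_1(TX),[v]\rangle.$$
Symplectic asphericity of $X$ then forces both quantities to vanish.

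With the hypotheses in place, Theorem~\ref{thm:main} applied to $L = \Delta$ and $\phi = \phi^1_{\tilde H_t}$ yields, for generic $H_t$,
$$\#\OP{Fix}(\phi^1_{H_t}) = \#\bigl(\Delta \cap \phi^1_{\tilde H_t}(\Delta)\bigr) \ge \OP{stabMorse}(\Delta) = \OP{stabMorse}(X).$$
I expect the only subtle points -- each modest in isolation -- to be the Maslov--Chern identification with its factor of $2$ via the folding trick, and reconciling the two notions of genericity: Theorem~\ref{thm:main} is phrased for a generic Hamiltonian diffeomorphism of the ambient space $X \times X$, whereas our $\phi^1_{\tilde H_t}$ is built only from a generic Hamiltonian on $X$. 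The latter causes no real difficulty, however, since the role of genericity is to secure transversality of $\Delta$ with $\phi(\Delta)$, and this transversality is already equivalent to non-degeneracy of the fixed points of $\phi^1_{H_t}$; any further auxiliary choices needed in the Floer-theoretic setup of Theorem~\ref{thm:main} (almost complex structures, perturbation data) are independent of $H_t$ and can be made separately.
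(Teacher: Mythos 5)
Your proposal is correct and takes essentially the same route as the paper: apply Theorem~\ref{thm:main} to the diagonal $\Delta \subset (X \times X,\omega\oplus(-\omega))$ with the Hamiltonian lifted through the second factor, identify fixed points with intersection points $\Delta \cap \Phi^1(\Delta)$, and verify the vanishing of the area and Maslov classes on $\pi_2(X\times X,\Delta)$ via the doubling/folding trick. The only quibble is a sign convention: with the paper's convention $i_{X_{H_t}}\omega=-dH_t$, the generator of $(\id_X,\phi^t_{H_t})$ on $(X\times X,\omega\oplus(-\omega))$ is $-H_t\circ\OP{pr}_2$ rather than $+H_t\circ\OP{pr}_2$; your discussion of genericity and of the factor of $2$ in the Maslov--Chern identity is, if anything, more explicit than the paper's.
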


Our result concerning the Lagrangian intersections is based on the same ingredients as those used to prove the results in \cite{DRGSM} by the authors. More precisely, we use that the invariance proof of Floer homology via bifurcation analysis established by Sullivan in \cite{KTIFFH} shows that the Floer homology complex is simple homotopy equivalent to the Morse complex. On the other hand, first note that bifurcation analysis in \cite{KTIFFH} is formulated in the setting
where the Hamiltonian perturbation term vanishes (i.e.~with a
differential that counts pseudoholomorphic strips), and hence we must use the dictionary between the Hamiltonian chord formulation of Floer
homology and the version formulated in terms of intersection points.
In addition, we must adapt the construction of Sullivan and perform bifurcation analysis for a Floer subcomplex consisting of only the contractible Hamiltonian chords; for more details see Section~\ref{bifurcationanalysissection}. The result \cite[Theorem 2.2]{OTSMNOACM} of Damian, based upon classical differential topology, can then be used in order to create a stable Morse function whose corresponding Morse homology complex is isomorphic to the Floer complex.

This result has been independently proven by  P\"{o}der \cite{P}. His proof is based on a different version of bifurcation analysis that is performed directly in the non-relative setting.

Finally, we note that bifurcation analysis in the setting of Floer homology also has been carried out by Lee in \cite{YJL1}, \cite{YJL2}, who used this to study torsion. 

\section{Background and previous results}
\label{sec:previous}
\subsection{Morse number and stable Morse number}
\label{morsenumber} First, we recall the definition of the stable
Morse number:
\begin{align*}
\OP{stabMorse}(X):=\min\limits_{k \ge 0 \atop F \in C^\infty(X
\times \R^k,\R)}\{\# \OP{Crit}(F)\ | \: F \: \mbox{is Morse and
almost quadratic at infinity}\}.
\end{align*}
A Morse function $F \colon X \times \R^k \to \R$ is said to be almost quadratic at infinity if there is a uniform bound on the norm $\|dF-dQ\|_{C^0}$ for some fixed non-degenerate (possibly indefinite) quadratic form $Q$ on $\R^k$. It follows immediately from the definition that
\begin{align}\label{morsevsstabmorse}\OP{Morse}(X)\geq \OP{stabMorse}(X).
\end{align}
It is known that Morse number and stable Morse number coincide in a
number of cases, i.e. for surfaces, for simply connected closed
three-manifolds (i.e.~the three-sphere) by Perelman's solution of
the Poincar\'{e} conjecture \cite{TEFFTRFAIGA}, and for simply
connected closed $k$-manifolds, where $k\geq 6$, see \cite{OTSOM}.
(Note that a hypothetical exotic smooth 4-sphere must have a Morse
number strictly smaller than the stable Morse number.) On the
contrary, in \cite[Theorem 1.2]{OTSMNOACM} Damian proved that closed
manifolds with a `complicated' fundamental group has the property
that $\OP{Morse}(X)> \OP{stabMorse}(X)$. The result of Damian
combined with the result of Gompf \cite{ANCOSM} stating that every
finitely presentable group can be realised as a fundamental group of
a closed symplectic $2n$-manifold, $n\geq 2$, leads to the examples
of closed symplectic $2n$-manifolds, $n\geq 2$, for which
Inequality~\ref{morsevsstabmorse} is not sharp. In addition, note
that from Morse theory it follows that both $\OP{Morse}(X)$ and
$\OP{stabMorse}(X)$ are bounded from below by the sum of Betti
numbers of $X$. For a more detailed discussion on $\OP{Morse}(X)$
and $\OP{stabMorse}(X)$ we refer the reader to \cite{OTSMNOACM,
LISFDA}.

\subsection{Generalities from symplectic geometry}
A \emph{symplectic manifold} consists of a pair $(X,\omega)$ of a $2n$-dimensional smooth manifold $X$, together with a closed two-form $\omega$ which is non-degenerate in the sense that $\omega^{\wedge{n}}$ is a volume form. A \emph{Lagrangian submanifold} $L \subset (X,\omega)$ is an $n$-dimensional smooth submanifold onto which $\omega$ pulls back to zero. For open symplectic manifolds, the notion of \emph{bounded geometry at infinity} was introduced in \cite{PCISM} in order to guarantee that the pseudoholomorphic curves behave well.

Let $(X,\omega)$ be a symplectic manifold. A
symplectomorphism $(X, \omega) \to (X, \omega)$ is called
Hamiltonian if it is a time-1 map $\phi^{1}_{H_t}$ of the flow of the
Hamiltonian vector field $X_{H_t}$ defined by $i_{X_{H_t}}
\omega=-dH_t$, where $H:\mathbb R\times X \to \mathbb R$ is a smooth
function and $H_t(x):=H(t,x)$. We define $\OP{Fix}(\phi^1_{H_t})$ to
be the number of fixed points of $\phi^1_{H_t}$ in $X$.

\subsection{Previous results in the closed case}
First, note that Arnold observed that the conjecture is true when $H$ is a $C^2$-small function. In this case, the conjecture follows from elementary differential topology.

The attempts to solve the strong Arnold conjecture gave rise to new ideas and techniques in the calculus of variations and in nonlinear elliptic systems. In particular, this conjecture led Floer to the construction of his homology theory \cite{MTFFPOSD, ARMIFTSA, TUGFOTSA, MTFLI}.

Floer's  original result \cite[Theorem 1]{SFPAHS} establishes Corollary \ref{cor:main} under the additional assumption that $X$ is simply connected and of dimension at least $\dim X \ge 6$. Namely, in this case the Morse number is determined by the integer graded homology groups with coefficients in $\Z$ (see Remark \ref{rem}).

By the work of Rudyak--Oprea \cite{OTLSCOSMATAC}, the version of the
Arnold conjecture in the non-generic case is true for
symplectically aspherical closed symplectic manifolds, i.e.~the
bound
$$\OP{Fix}(\phi^1_{H_t}) \geq \OP{Crit}(X)$$
is satisfied for a general Hamiltonian diffeomorphism.

Besides the strong Arnold conjecture, there are many other estimates for a generic Hamiltonian diffeomorphism of a similar flavor which in general are weaker than $\OP{Morse}(X)$. We only mention a few, and refer the reader to \cite{MTFHS}, \cite{SIAHD}, and \cite{LOFH} for more details.

A very general lower bound that has been established is
\begin{align*}
\OP{Fix}(\phi^1_{H_t})\geq \sum\limits_{i} \dim H_i(X,\mathbb Q),
\end{align*}
which was proven for a general closed symplectic manifold in Fukaya--Ono \cite{ACAGWI, ACAGWIFGSM}, Liu-Tian \cite{FHAAC}, and Ruan \cite{VNAPHC}, building on the work by Floer. Also, see \cite{SFDTAQC} by Piunikhin--Salamon--Schwarz.

In more recent results the above general bound has been improved by
taking also the fundamental group of the symplectic manifold into
account (e.g.~providing bounds related to the minimal number of generators);
see the works of Ono--Pajitnov \cite{OTFPOAHDIPOFG} and
Barraud \cite{AFFG}.

\section{Examples of symplectically aspherical manifolds}
Here we mention  a few examples of symplectically aspherical (and
even symplectically atoroidal) manifolds to which our main result
can be applied.
\subsubsection*{Surfaces and fibre bundles}
Observe that any symplectic surface $\Sigma_g$ of genus $g\geq 1$
has the property that $\pi_2(\Sigma_g)=0$, and is hence
symplectically aspherical. Since products of such surfaces also have
vanishing $\pi_2$, they provide examples in arbitrary even
dimensions.  Even more generally, any symplectic fibre bundle
$$\Sigma_{g_1}\to E \to \Sigma_{g_2}$$ with a fibre $\Sigma_{g_1}$ and
a base $\Sigma_{g_2}$, where $g_1,g_2\geq 1$, the long exact
sequence of a fibration implies that $\pi_2(E)=0$. Note that in
certain situations $E$ is symplectic (and hence symplectically
aspherical) because of the following result due to
Thurston~\cite{SSEOSM}:
\begin{thm}[\cite{SSEOSM}]
A smooth $\Sigma_{g_1}$-bundle $\pi: E\to \Sigma_{g_2}$ admits a
$\pi$-compatible (i.e. compatible with the bundle structure)
symplectic form if and only if the image $[\Sigma_{g_1}] \in
H_2(E;\R)$ of the fibre is non-zero in homology.
\end{thm}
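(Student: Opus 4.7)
The plan is to prove the biconditional in two directions, with the real content lying in the sufficiency.

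For necessity, if $\omega$ is a $\pi$-compatible symplectic form on $E$, then by the compatibility requirement $\omega$ restricts to a symplectic form on every fibre. Since each fibre has dimension $2$, this restriction is an area form, and hence $\int_{\Sigma_{g_1}}\omega \neq 0$. Pairing with de~Rham cohomology then forces $[\Sigma_{g_1}] \neq 0$ in $H_2(E;\R)$.

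For sufficiency, assume $[\Sigma_{g_1}] \neq 0$ in $H_2(E;\R)$. By de~Rham duality one picks a closed two-form $\alpha$ on $E$ normalised so that $\int_{\Sigma_{g_1}}\alpha = 1$. The core step is to modify $\alpha$ by an exact form to produce a closed two-form $\tau$ that restricts to an area form on every fibre. My approach would be: cover $\Sigma_{g_2}$ by finitely many contractible open sets $\{U_j\}$ over which the bundle trivialises, fix an area form $\omega_0$ on the model fibre of total area $1$, and pull back $\omega_0$ through each trivialisation to obtain a closed two-form $\omega_j$ on $\pi^{-1}(U_j)$ that is fibrewise an area form. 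The difference $\omega_j - \alpha$ is closed on $\pi^{-1}(U_j)$ with vanishing fibre period, and since $\pi^{-1}(U_j)$ deformation retracts onto a single fibre, this difference is exact; write $\omega_j - \alpha = d\eta_j$. Choose a partition of unity $\{\rho_j\}$ on the base subordinate to $\{U_j\}$ and set
\[
\tau := \alpha + d\!\Bigl(\sum_j \pi^*\rho_j \cdot \eta_j\Bigr) = \sum_j \pi^*\rho_j \cdot \omega_j + \sum_j d(\pi^*\rho_j) \wedge \eta_j,
\]
where I have used $\sum_j \pi^*\rho_j \equiv 1$ on $E$. The second sum restricts trivially to any fibre, because each $\pi^*\rho_j$ is fibrewise constant, so $\tau|_{F_b}$ is the positive convex combination $\sum_j \rho_j(b) \,\omega_j|_{F_b}$ of area forms of the same orientation, and is therefore itself an area form.

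To finish, fix an area form $\sigma$ on $\Sigma_{g_2}$ and consider $\omega_K := \tau + K\pi^*\sigma$, which is closed and $\pi$-compatible. One has $\omega_K \wedge \omega_K = \tau\wedge \tau + 2K\,\tau\wedge \pi^*\sigma$, and the second term is everywhere a positive volume form since it factors as a fibrewise area times a base area. For $K$ sufficiently large it therefore dominates the first term pointwise, making $\omega_K$ symplectic. The main obstacle throughout is the patching step producing $\tau$: although each $\omega_j$ is closed on its chart, the $\omega_j$ are not a priori coherent on overlaps, and the reference closed form $\alpha$ supplied by the hypothesis $[\Sigma_{g_1}]\neq 0$ together with the contractibility of the $U_j$ is precisely what allows one to correct the discrepancies by exact forms without destroying fibrewise positivity.
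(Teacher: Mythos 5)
Your proof is correct, and it is essentially Thurston's original argument (the ``Thurston trick''): the paper states this theorem only as a citation of Thurston's note and gives no proof of its own, so there is nothing in the text to diverge from. The one point you should make explicit is why the fibre restrictions $\omega_j|_{F_b}$ and $\omega_k|_{F_b}$ induce the same orientation on an overlap: this follows from your own normalisation, since the vanishing of the fibre periods of $\omega_j-\alpha$ forces $\int_{F_b}\omega_j=\int_{F_b}\alpha=1$ for every $j$ (after adjusting each trivialisation by an orientation reversal of the model fibre where necessary), so the convex combination $\sum_j\rho_j(b)\,\omega_j|_{F_b}$ is indeed nowhere vanishing. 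With that remark added, the necessity direction, the patching producing $\tau$, and the estimate $\omega_K\wedge\omega_K=\tau\wedge\tau+2K\,\tau\wedge\pi^*\sigma$ (the $K^2$ term vanishing because $\sigma\wedge\sigma=0$ on a surface) are all complete and correct on the compact total space $E$.
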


\subsubsection*{Homogeneous spaces}
The next type of examples are homogeneous spaces. First, recall that
a closed (not necessarily symplectic) $2n$-manifold $X$ with $a\in
H^2(M,\R)$ such that $a^n\neq 0$ is called $c$-symplectic. Consider
a Lie group $G$ and a uniform lattice $\pi$ on G (i.e. $\pi$ is a
discreet subgroup of $G$ such that $G/\pi$ is compact). In addition,
assume that $G$ is completely solvable (i.e. any adjoint linear
operator of the Lie algebra $\mathfrak g$ has only real
eigenvalues).

The following statement has been proven by
Ib\'{a}\~{n}ez--K\c{e}dra--Rudyak--Tralle:
\begin{thm}{\cite[Lemma 4.2]{OFGOSAM}}
For a completely solvable simply connected Lie group $G$ and a
uniform lattice $\pi<G$ such that $X:=G/\pi$ is c-symplectic, $X$
admits a symplectic structure and $X=K(\pi_1(X),1)$. In particular,
$\pi_2(X)=0$, and therefore $X$ is symplectically aspherical.
\end{thm}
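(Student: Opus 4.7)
The plan is to establish the two structural assertions separately---the existence of a symplectic form and the identification $X = K(\pi_1(X), 1)$---and then derive that $X$ is symplectically aspherical as an immediate corollary.

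For the asphericity: since $G$ is a connected, simply connected solvable Lie group, a classical result (due to Chevalley) implies that $G$ is diffeomorphic to $\R^{\dim G}$ as a smooth manifold. The uniform lattice $\pi < G$ acts freely and properly discontinuously on $G$ by left translation, so the quotient map $G \to G/\pi = X$ is a covering whose total space is contractible. This exhibits $G$ as the universal cover of $X$, hence $X$ is aspherical; equivalently, $X = K(\pi_1(X), 1)$. In particular all higher homotopy groups vanish, so $\pi_2(X) = 0$.

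For the symplectic form: the essential input is Hattori's theorem, which asserts that for a completely solvable simply connected Lie group $G$ with uniform lattice $\pi$, the de Rham cohomology of $X = G/\pi$ is computed by the subcomplex of left-invariant differential forms, yielding $H^{\bullet}(X;\R) \cong H^{\bullet}(\mathfrak{g}; \R)$, the Chevalley--Eilenberg cohomology of the Lie algebra $\mathfrak{g}$. Consequently the given class $a \in H^2(X;\R)$ has a representative $\omega$ that is a closed, left-invariant $2$-form on $G$ descending to $X$. Since $\omega$ is left-invariant, so is $\omega^n$, and its value at every point of $G$ is determined by $\omega^n|_e \in \Lambda^{2n}\mathfrak{g}^*$. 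The c-symplectic hypothesis $a^n \neq 0$ forces $[\omega^n] \neq 0$ in $H^{2n}(X;\R)$; and because the integral over the compact quotient $X$ of a left-invariant top-degree form is a nonzero multiple of its value at the identity, this in turn forces $\omega^n|_e \neq 0$. By left-invariance $\omega$ is then non-degenerate at every point, so $\omega$ is a symplectic form representing $a$.

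Combining the two claims, the symplectically aspherical conclusion is automatic: since $\pi_2(X) = 0$, any map $S^2 \to X$ is null-homotopic, and so both $[\omega]$ and the first Chern class $c_1(TX, J)$ pull back to zero on $\pi_2(X)$. The main obstacle is the non-degeneracy step: Hattori's theorem only supplies a cohomologically correct left-invariant representative, and one must exploit left-invariance together with the non-vanishing of $a^n$ to upgrade a cohomological statement into pointwise non-degeneracy. Everything else is essentially structural.
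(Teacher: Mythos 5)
The paper states this result as a quotation of \cite[Lemma 4.2]{OFGOSAM} and gives no proof of its own, so the only meaningful comparison is with the argument in that reference --- and your proof is exactly that argument: Hattori's theorem reduces the c-symplectic class to a left-invariant representative whose pointwise non-degeneracy follows from $a^n\neq 0$ and invariance, while asphericity comes from the contractibility of the simply connected solvable group $G$ acting as universal cover. Your write-up is correct and complete (indeed, the integral step is not even needed: an invariant top form vanishing at $e$ vanishes identically, so $[\omega^n]=a^n\neq 0$ already forces $\omega^n|_e\neq 0$).
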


\subsubsection*{Symplectic manifolds of negative sectional curvature}
The next series of examples comes from the following lemma.
\begin{lem}\label{nonexoftoriusgugi}
If $\pi_2(X)=0$ and $\pi_1(X)$ contains no subgroup isomorphic to $\Z^2$, then any element in $H^2(X,\Z)$ and $H^2(X,\R)$ pull back to zero under a map $f \colon \T^2 \to X$ from a torus. In particular, this is the case when $\pi_1(X)$ is hyperbolic and the universal cover of $X$ is contractible.
\end{lem}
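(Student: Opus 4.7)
The plan is to reduce the statement to a standard computation in group cohomology. The hypothesis $\pi_2(X)=0$ implies that the classifying map $c \co X \to K(\pi_1(X),1)$ is $3$-connected: it is a $\pi_1$-isomorphism by construction, and since both spaces have vanishing $\pi_2$ and the target has vanishing $\pi_i$ for $i \ge 2$, the map induces isomorphisms on $\pi_i$ for $i \le 2$ and trivially surjects onto $\pi_3$. Relative Hurewicz then yields an isomorphism $c^*\co H^2(K(\pi_1(X),1);A) \xrightarrow{\cong} H^2(X;A)$ for any abelian coefficient group $A$. Since $f^* = (c\circ f)^* \circ (c^*)^{-1}$ on $H^2$, it suffices to show that $(c\circ f)^*$ vanishes on $H^2$.

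Because $\T^2 \simeq K(\Z^2,1)$ is itself aspherical, the composition $c \circ f \co \T^2 \to K(\pi_1(X),1)$ is, up to homotopy, determined by and equal to the map of classifying spaces induced by the group homomorphism $f_* \co \pi_1(\T^2)=\Z^2 \to \pi_1(X)$. Setting $H := f_*(\Z^2)$, one obtains a homotopy factorization $c \circ f \simeq \T^2 \to K(H,1) \hookrightarrow K(\pi_1(X),1)$. Now $H$ is a finitely generated abelian group which embeds in $\pi_1(X)$, so $H \cong \Z^r \oplus T$ with $T$ finite, and the hypothesis that $\pi_1(X)$ contains no subgroup isomorphic to $\Z^2$ forces $r \le 1$.

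Consequently $K(H,1)$ is homotopy equivalent to $K(T,1)$, $S^1$, or $S^1 \times K(T,1)$ for a finite group $T$. The classical fact that $H^i(K(T,1);\Z)$ is a finite abelian group for every $i \ge 1$ when $T$ is finite, combined with the K\"{u}nneth formula, yields that $H^2(K(H,1);\Z)$ is itself a finite abelian group. As $H^2(\T^2;\Z) \cong \Z$ is torsion-free, any homomorphism to it from this finite group is zero, which handles integer coefficients; and after tensoring with $\R$ the cohomology group $H^2(K(H,1);\R)$ simply vanishes, which handles real coefficients.

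The last sentence of the lemma is then immediate: a contractible universal cover trivially gives $\pi_2(X) = 0$, and hyperbolicity of $\pi_1(X)$ implies by the standard structure theory of Gromov hyperbolic groups that every abelian subgroup is virtually cyclic, in particular ruling out $\Z^2$. The key input requiring some care, though not conceptually deep, is the cohomological observation that $H^2(K(H,1);\Z)$ is torsion whenever the abelian group $H$ has rank at most one---it is the bound on the rank of $H$, coming directly from the absence of $\Z^2$-subgroups in $\pi_1(X)$, that drives the whole argument.
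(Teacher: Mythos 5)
Your proof is correct, but it runs along a genuinely different track from the one in the paper. You pass to the classifying space: using $\pi_2(X)=0$ to make $c\co X \to K(\pi_1(X),1)$ an isomorphism on $H^2$ with arbitrary coefficients, factoring $c\circ f$ through $K(H,1)$ for the $2$-generated abelian image $H=f_*(\Z^2)$, and then killing $H^2(K(H,1);\Z)$ up to torsion because the no-$\Z^2$ hypothesis forces $\operatorname{rank}(H)\le 1$. The paper instead works directly in homology: writing $f_*(\pi_1(\T^2))\cong \Z_k\oplus\Z_l$ with $l<\infty$, it precomposes $f$ with an $l$-fold cover of the torus so that one generating circle becomes null-homotopic; the resulting map factors through $\T^2$ with that circle collapsed (homotopy equivalent to $S^2\vee S^1$), so $l\cdot f_*[\T^2]$ is spherical and hence zero by $\pi_2(X)=0$, making $[f]$ torsion and therefore annihilated by any class in $H^2(X;\Z)$ or $H^2(X;\R)$. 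Your route costs more machinery (relative Hurewicz for a non-simply-connected map, group cohomology of finite groups, K\"{u}nneth) but makes the final cohomological vanishing completely transparent; the paper's argument is shorter and more elementary but compresses the key geometric step (``$l\cdot f$ can be represented by a spherical class'') into one sentence. Both arguments, in the end, isolate the same obstruction, namely that $f_*[\T^2]$ is torsion in $H_2(X;\Z)$, and both handle the final sentence of the lemma identically (abelian subgroups of hyperbolic groups are virtually cyclic; a contractible universal cover gives $\pi_2=0$).
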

\begin{proof}
Since $f_* (\pi_1(\T^2))$ is a subgroup isomorphic to $\Z_k \oplus \Z_l$ for some $k \in \N \cup \{\infty\}$ and $l \in \N$, it follows that $l\cdot f$ can be represented by a spherical class. Using the assumption $\pi_2(X)=0$, we see that
$0=l\cdot f \in H_2(X)$
or, in other words, $[f]$ is torsion.
\end{proof}
The archetypal example symplectic manifolds satisfying the
assumptions of Lemma~\ref{nonexoftoriusgugi}, and which hence are
symplectically atoroidal, are manifolds of negative sectional
curvature (e.g.~K\"{a}hler manifolds with such curvature
properties).

\subsubsection*{Manifolds with non-vanishing $\pi_2$} Besides the already mentioned examples for which $\pi_2=0$, Gompf constructed examples
of closed symplectic manifolds which are symplectically aspherical
but that have nontrivial $\pi_2$, see Example 6 in \cite{OSAMWNP2}.

\section{Bifurcation analysis for the Floer complex of contractible Hamiltonian chords and proof of Theorem~\ref{thm:main}}
\label{bifurcationanalysissection}
The additional difficulty when dealing with the setting of Theorem~\ref{thm:main} proven here compared to that of \cite[Theorem 1.3]{DRGSM} is the following: it is
necessary to exclude certain generators in order to have a
well-defined graded complex with coefficients in the group ring
$\Z[\pi_1(L)]$ of the fundamental group. More precisely, we need to
consider only those generators of the Floer homology complex
corresponding to the \emph{contractible} Hamiltonian chords. Since
the invariance of the simple homotopy type follows from Sullivan's
bifurcation analysis \cite{KTIFFH}, it will moreover be necessary to
use the dictionary from the Hamiltonian chord formulation of Floer
homology to the version formulated in terms of intersection points.
Even though these techniques are not new, we still outline them in order to make our setup precise.

In addition, we refer to the work \cite{ELCAPI} by Su\'{a}rez and
later \cite{SHEONL} by Abouzaid--Kragh, where the simple homotopy
type of exact Lagrangian submanifolds are studied using Lagrangian
Floer homology.

\subsection{The Floer complex with twisted coefficients}
Lagrangian Floer homology with coefficients twisted by the fundamental group were first studied in \cite{KTIFFH} by Sullivan, and later in \cite{FHOTUC} by Damian and \cite{NLWVMCAHQ} by Abouzaid.

In the following, we consider two closed and relative spin Lagrangian submanifolds $L_i \subset (X,\omega),$ $i=0,1,$ inside a tame symplectic manifold, together with a Hamiltonian isotopy $\phi^t_{H_s}$ of $(X,\omega).$ Both of $L_i,$ $i=0,1,$ are moreover assumed to satisfy the property that the Maslov class as well as the symplectic area class vanish on $\pi_2(X,L_i).$

For a generic Hamiltonian $H_t,$ it is the case that $L_0 \cap \phi^t_{H_t}(L_1)$ consists of transverse intersections consisting of double-points for all $t \in [0,1]$, except for a finite number of non-transverse birth/death moments; we refer to \cite{KTIFFH} for more details.

We now consider a generic two-parameter family $J_{s,t},$ $(s,t) \in [0,1]^2,$ of tame almost complex structures on $(X,\omega),$ together with the one-parameter family
$$G_{s,t}:=sH_{st} \colon X \to \R$$
of time-dependent Hamiltonians induced by $H_t.$ For a generic fixed number $s_0 \in [0,1]$, we  define the Floer homology complex over the group ring $\Z[\pi_1(L_0)]$ freely generated by the time-one Hamiltonian chords from $L_0$ to $L_1$ of the form
$$ [0,1] \ni t \mapsto p(t)= \phi^t_{G_{s_0,t}}(p), \:\: p \in L_0, \: \phi^1_{G_{s_0,t}}(p) \in L_1,$$
living in a fixed component $\mathfrak{p}$ of the space of paths $\Pi(X,L_0,L_1)$ from $L_0$ to $L_1.$ We denote this complex by
$$CF^{\mathfrak{p}}_\bullet(L_0,L_1;G_{s_0,t},J_{s_0,t}):=\Z[\pi_1(L_0)] \langle p(t); \: p(t) \in \mathfrak{p} \in \pi_0(\Pi(X,L_0,L_1)) \rangle. $$

The coefficient $\langle d(p_-),g\cdot p_+ \rangle \in \Z$ in the definition of the differential, where $g \in \pi_1(L_0),$ is given by the count of rigid \emph{Floer strips from $p_-$ to $p_+$ with boundary on $(L_0,L_1)$ defined for the pair $(G_{s_0,t},J_{s_0,t})$} with a boundary in a homotopy class corresponding to $g$ (after closing it up). More precisely, we are interested in the rigid solutions to the boundary value problem
$$
\begin{cases}
u \colon \R \times [0,1] \to X,\\
u(s,i) \in L_i,\:\: i \in \{0,1\},\\
\lim\limits_{s \to \pm \infty} u(s,t)=p_\pm(t),\\
\partial_s u +J_{s_0,t}(\partial_t u(s,t)-X_{G_{s_0,t}}\circ u(s,t))=0,
\end{cases}
$$
for which $u(s,0)$ moreover lives in the class $g \in \pi_1(L_0).$ In order to identify the boundary of the strip with a based loop, we must first close it up by taking concatenations with so-called capping paths. These are auxiliary choices of paths in $L_0$ connecting the end-points of each Hamiltonian chord with the base point.

In the case when the Hamiltonian term $G_{s_0,t} \equiv 0$ vanishes, the above PDE reduces to a Cauchy-Riemann equation for a time-dependent almost complex structure, while the Hamiltonian chords simply are intersection points $L_0 \cap L_1.$ The corresponding solutions will in this case be called \emph{$J_{s_0,t}$-holomorphic strips from $p_-$ to $p_+$ with boundary on $(L_0,L_1)$.}

In the following we will consider the case when $L_0=L_1=L$ and $\mathfrak{p}=0 \in \pi_1(X,L)$ is the class of \emph{contractible} Hamiltonian chords. The assumption that both the symplectic area class and the Maslov class vanish on $\pi_2(X,L)$ makes the Floer theory particularly well-behaved in this setting; namely, there are notions of both a canonical action and a canonical $\Z$-grading.

\emph{A canonical action:} For every Hamiltonian chord $p(t) \in \mathfrak{p}=0$ we fix a \emph{capping disc,} by which we mean a smooth map $c_p \colon D^2 \to X$ for which $e^{it\pi} \in \partial D^2 \cap \{y \ge 0\}$ is mapped to $p(t),$ while $c_p( \partial D^2 \cap \{ y \le 0\}) \in L.$ (Such a disc exists by the contractibility of the chord.) Each generator $p(t)$ can now be equipped with the action
$$ \mathfrak{a}(p):=\int_{c_p} \omega -\int_0^1G_{s_0,t}(p(t))dt $$
which, by the condition that the symplectic area class vanishes on
$\pi_2(X,L),$ is independent of the choice of capping disc. The
\emph{Floer energy} of a strip is the non-negative quantity
$$ E(u):=\int_{-\infty}^{+\infty}\int_0^1 \omega(\partial_s u(s,t),J_{s_0,t} \partial_s(u,t)) \, dt\, ds \ge 0.$$
It is a standard fact that:
\begin{lem}
The Floer energy of Floer strip $u$ from $p_-$ to $p_+$ is
non-negative and equals
$$ E(u)= \mathfrak{a}(p_+)-\mathfrak{a}(p_-) \ge 0.$$
Moreover, its energy vanishes if and only if $p_-=p_+$, and the
strip is the `constant' path $u(s,t)=p_-(t).$ In particular, the
coefficient $\langle d(p_-),g\cdot p_+ \rangle$ vanishes whenever
$\mathfrak{a}(p_-) \ge \mathfrak{a}(p_+)$ is satisfied.
\end{lem}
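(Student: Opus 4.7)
The plan is a standard energy identity calculation, broken into three steps: a pointwise non-negativity argument, a global integration that uses the Floer equation plus Stokes-type reasoning, and finally an identification of the integrated area with the action difference via the capping data.

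\textbf{Non-negativity and the pointwise identity.} Because $J_{s_0,t}$ is tamed by $\omega$, the expression $g_{J}(V,W) := \tfrac{1}{2}(\omega(V,J_{s_0,t}W)+\omega(W,J_{s_0,t}V))$ defines a Riemannian metric, and one checks directly from tameness that $\omega(\partial_s u, J_{s_0,t}\partial_s u) = |\partial_s u|_{g_J}^2 \ge 0$. Hence $E(u)\ge 0$ with equality pointwise if and only if $\partial_s u \equiv 0$. To compute the integrand I would use the Floer equation, which after applying $J_{s_0,t}$ once yields $J_{s_0,t}\partial_s u = \partial_t u - X_{G_{s_0,t}}\circ u$. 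Substituting gives
\[
\omega(\partial_s u, J_{s_0,t}\partial_s u) = \omega(\partial_s u,\partial_t u) - \omega(\partial_s u, X_{G_{s_0,t}}).
\]
The first term is precisely $u^*\omega(\partial_s,\partial_t)$. For the second, the defining identity $i_{X_{G}}\omega=-dG$ gives $\omega(\partial_s u, X_{G_{s_0,t}}) = dG_{s_0,t}(\partial_s u)$, which equals $\partial_s\bigl(G_{s_0,t}(u(s,t))\bigr)$ at fixed $t$.

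\textbf{Integration.} Integrating the above over $\R\times[0,1]$, the $dG$ term becomes, by the fundamental theorem of calculus in $s$ and the limits $u(\pm\infty,t)=p_\pm(t)$,
\[
\int_0^1\!\!\!\int_{\R}\partial_s\bigl(G_{s_0,t}(u)\bigr)\,ds\,dt = \int_0^1 G_{s_0,t}(p_+(t))\,dt - \int_0^1 G_{s_0,t}(p_-(t))\,dt,
\]
while the first term is the symplectic area $\int_{\R\times[0,1]} u^*\omega$ of the strip. The next step is to identify this symplectic area with the capping data: the concatenation $c_{p_-}\# u\# \overline{c_{p_+}}$ is a sphere-with-Lagrangian-boundary class in $\pi_2(X,L)$ (the boundary lies entirely on $L$ by construction of the capping discs and the boundary condition on $u$), so the assumption that the symplectic area class vanishes on $\pi_2(X,L)$ yields $\int u^*\omega = \int_{c_{p_+}}\omega - \int_{c_{p_-}}\omega$. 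Combining these two identities and rearranging exactly matches the definition $\mathfrak{a}(p)=\int_{c_p}\omega-\int_0^1 G_{s_0,t}(p(t))\,dt$, giving $E(u) = \mathfrak{a}(p_+)-\mathfrak{a}(p_-)$. Combined with non-negativity this establishes the displayed inequality.

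\textbf{The equality case and the consequence for the differential.} If $E(u)=0$, the pointwise identity forces $\partial_s u \equiv 0$, so $u(s,t)=p(t)$ is $s$-independent. The Floer equation then reduces to $J_{s_0,t}(\dot p - X_{G_{s_0,t}}\circ p)=0$, i.e.\ $p$ is a time-one Hamiltonian chord, and the boundary conditions at $s=\pm\infty$ force $p=p_-=p_+$. For the last statement, if $\mathfrak{a}(p_-)>\mathfrak{a}(p_+)$ there can be no Floer strip from $p_-$ to $p_+$ by the energy identity, while if equality holds then every such strip must be constant (hence carries no $\R$-family of translations giving a rigid element, and in particular $p_-=p_+$ so the off-diagonal coefficient is zero). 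Either way $\langle d(p_-),g\cdot p_+\rangle$ vanishes.

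The only subtle point, and the part I would write most carefully, is the gluing/capping step: one must verify that $c_{p_-}\# u \# \overline{c_{p_+}}$ really is an element of $\pi_2(X,L)$, which uses both the contractibility of the chords (to have capping discs at all) and the specific conventions for how the capping discs' boundaries meet the Lagrangian. Everything else is a routine identity in Floer theory.
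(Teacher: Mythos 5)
Your proof is correct; the paper states this lemma as ``a standard fact'' and supplies no proof, and your argument is precisely the standard one (pointwise positivity of $\omega(\partial_s u, J\partial_s u)$ from tameness, substitution of the Floer equation, integration of the $dG$-term in $s$, and identification of $\int u^*\omega$ with the difference of capping-disc areas using that $\omega$ vanishes on $\pi_2(X,L)$). The only cosmetic slip is calling the glued object $c_{p_-}\# u\# \overline{c_{p_+}}$ a ``sphere-with-Lagrangian-boundary''; it is a disc with boundary on $L$, i.e.\ a class in $\pi_2(X,L)$, which is exactly what your argument uses.
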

In other words, with our conventions the differential increases action.

\emph{A canonical $\Z$-grading:} The $\Z$-grading in this setting was originally defined by Floer, see Proposition 2.4 in \cite{MTFLI}. Also see \cite{GLS} by Seidel for later developments. Using the terminology established in the latter, the canonical grading comes from the fact that, in the case when $L_0=L_1$, we are able to use the \emph{same} Maslov potential on both Lagrangians. (The Maslov potential in itself is, however, not canonical.)

\subsection{Naturality}
The following technique provides a dictionary between the Hamiltonian-chord formulation and intersection-point formulation of Lagrangian Floer homology. This technique has been well-known since the early days of Floer theory, and sometimes goes under the name of the `naturality'. This technique is needed since the invariance proof based upon bifurcation analysis is formulated in the setting of intersection points and pseudoholomorphic strips. On the other hand, we need to `remember' the Hamiltonian in order to have a well-defined notion of a contractible generator (for which the action and grading are well-defined).

For a time-dependent almost complex structure $J_t$ and a Hamiltonian isotopy $\phi^t_{H_t}$ we define the induced time-dependent almost complex structure
$$ \widetilde{J}_t := D(\phi^1_H \circ (\phi^t_H)^{-1}) \circ J_t \circ D(\phi^1_H \circ (\phi^t_H)^{-1})^{-1},$$
which is tame if and only if the former is.
\begin{prop}
\label{prp:naturality}
We have the following two bijective correspondences:
\begin{itemize}
\item Hamiltonian time-one chords $\phi^1_{H_t}(p) \in L_1,$ $p \in L_0,$ from $L_0$ to $L_1$ correspond to intersection points $\phi^1_{H_t}(L_0) \cap L_1;$
\item Floer strips between two Hamiltonian chords with boundary on $(L_0,L_1)$ defined by $(H_t,J_t)$, and $\widetilde{J}_t$-holomorphic strips between the corresponding double points with boundary on $(\phi^1_{H_t}(L_0),L_1)$.
\end{itemize}
\end{prop}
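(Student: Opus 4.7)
The plan is to make the naturality transformation completely explicit. Define the smooth family of diffeomorphisms
$$\Psi_t := \phi^1_H \circ (\phi^t_H)^{-1}, \qquad t \in [0,1],$$
interpolating between $\Psi_0 = \phi^1_H$ and $\Psi_1 = \mathrm{id}$. The first bullet is then almost tautological: the map sending a Hamiltonian time-one chord $p(t) = \phi^t_H(p)$ with $p \in L_0$ and $\phi^1_H(p) \in L_1$ to the intersection point $\phi^1_H(p) \in \phi^1_H(L_0) \cap L_1$ is inverse to the map sending $q \in \phi^1_H(L_0)\cap L_1$ to the chord $t \mapsto \phi^t_H((\phi^1_H)^{-1}(q))$.

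For the second bullet, associate to each Floer strip $u(s,t)$ for $(H_t,J_t)$ with boundary on $(L_0, L_1)$ the map $v(s,t) := \Psi_t(u(s,t))$. Since $\Psi_0 = \phi^1_H$ and $\Psi_1 = \mathrm{id}$, the boundary conditions transform as $v(s,0) \in \phi^1_H(L_0)$ and $v(s,1) \in L_1$; moreover, the chord asymptotics of $u$ at $p_\pm(t) = \phi^t_H(p_\pm)$ become constant-in-$t$ intersection-point asymptotics $\phi^1_H(p_\pm)$ of $v$. The main calculation is that differentiating the identity $\phi^t_H \circ (\phi^t_H)^{-1} = \mathrm{id}$ in $t$ yields $\partial_t \Psi_t(x) = -D\Psi_t|_x \cdot X_{H_t}(x)$, and hence
$$\partial_s v = D\Psi_t(\partial_s u), \qquad \partial_t v = D\Psi_t\bigl(\partial_t u - X_{H_t}\circ u\bigr).$$
Substituting into the Floer equation and invoking the definition of $\widetilde{J}_t$ as the $\Psi_t$-conjugate of $J_t$ immediately transforms it into the Cauchy-Riemann equation $\partial_s v + \widetilde{J}_t \partial_t v = 0$.

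The construction is manifestly reversible via $u(s,t) := \Psi_t^{-1}(v(s,t))$, giving the claimed bijection. The only subtlety worth any care is the sign in $\partial_t \Psi_t$, which is dictated by the convention $i_{X_{H_t}}\omega = -dH_t$ fixed at the beginning of the paper and determines whether one ends up with $\widetilde{J}_t$ or its inverse conjugate. Tameness of $\widetilde{J}_t$ (asserted just above the proposition) then follows because $\Psi_t$ is a symplectomorphism. Finally, $\Psi_t$ preserves symplectic area, Floer energy, Fredholm index, and homotopy classes of boundary loops (after identifying $L_0$ and $\phi^1_H(L_0)$ via $\phi^1_H$), so the bijection extends to all of the moduli-theoretic structure needed in the bifurcation analysis later.
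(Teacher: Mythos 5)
Your proof is correct and follows essentially the same route as the paper: you conjugate by $\Psi_t=\phi^1_H\circ(\phi^t_H)^{-1}$, and the key identity $\partial_t\Psi_t(x)=-D\Psi_t|_x\,X_{H_t}(x)$ (which you obtain by differentiating $\phi^t_H\circ(\phi^t_H)^{-1}=\mathrm{id}$, and the paper by differentiating $\Psi_t^{-1}\circ\Psi_t=\mathrm{id}_X$ --- the same computation) converts the Floer equation into the $\widetilde{J}_t$-Cauchy--Riemann equation exactly as in the paper's argument. Your additional remarks on reversibility, asymptotics, and preservation of the moduli-theoretic structure are consistent with, and slightly more explicit than, what the paper records.
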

\begin{proof}
The pseudoholomorphic strip corresponding to the Floer strip $u(s,t)$ can be expressed as
$$\widetilde{u}(s,t):=(\phi^1_H \circ (\phi^t_H)^{-1})\circ u(s,t).$$
It is readily checked that this is a $\widetilde{J}_t$-holomorphic strip of the form claimed. To that end, one uses the equality
$$\partial_t \widetilde{u}(s,t)=D(\phi^1_H \circ (\phi^t_H)^{-1})\partial_t u(s,t)+\partial_t(\phi^1_H \circ (\phi^t_H)^{-1})_{u(s,t)}$$
combined with
$$ D(\phi^1_H \circ (\phi^t_H)^{-1})^{-1}\partial_t(\phi^1_H \circ (\phi^t_H)^{-1})_{u(s,t)}=-X_{H_t}(u(s,t)),$$
where the latter follows from partially differentiating
$$ (\phi^1_H \circ (\phi^t_H)^{-1})^{-1} \circ (\phi^1_H \circ (\phi^t_H)^{-1})= \id_X $$
with respect to the $t$-variable.
\end{proof}

\subsection{Invariance of the simple homotopy type}
Given a smooth manifold $N$ and a Morse function $f:N\to \R$, let $(CM_\bullet(N,f;\Z[\pi_1(N)]),\partial_f)$ be the Morse homology complex with coefficients in $\Z[\pi_1(N)]$ twisted by the fundamental group of $N$. In particular, this complex is freely generated as a $\Z[\pi_1(N)]$-module by the critical points of $f$. (In general, additional assumptions on $f$ and the metric are necessary in order to have a well-defined differential.)

The following is the invariance result for the above Floer homology complex.
\begin{thm}
Under the assumptions of Theorem \ref{thm:main}, the Floer complex $CF_\bullet^{\mathfrak{p}}(L,L;H_t,J_t)$ for $\mathfrak{p}=0 \in \pi_0(\Pi(X,L,L))$ is well-defined, canonically graded in $\Z$, and simple homotopy equivalent to the Morse complex $(CM_\bullet(L,f;\Z[\pi_1(L)]),\partial_f)$ of $L$.
\end{thm}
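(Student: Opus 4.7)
The overall strategy is to combine the naturality technique (Proposition~\ref{prp:naturality}) with Sullivan's bifurcation argument in \cite{KTIFFH}, performed carefully on the subcomplex generated by the contractible chords. I would interpolate through a generic one-parameter family of pairs $(H_t^s, J_t^s)$, starting from $(H_t, J_t)$ at $s=1$ and ending at $(h_t, J_t^{\mathrm{std}})$ at $s=0$, where $h_t$ is a $C^2$-small autonomous Hamiltonian whose graph $\phi^1_{h}(L)$ lies inside a Weinstein neighborhood of $L$ and is the graph of $df$ for a Morse function $f \colon L \to \R$. At $s=0$, Floer's classical identification of pseudoholomorphic strips with negative gradient trajectories of $f$ produces the twisted Morse complex $(CM_\bullet(L,f;\Z[\pi_1(L)]),\partial_f)$; the task is then to show that the chain complex at $s=1$ is simple homotopy equivalent to the one at $s=0$.

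The first task is to verify that the contractible-chord piece $CF^{\mathfrak{p}=0}_\bullet$ really is a graded subcomplex. Any Floer strip projects to a continuous path in $\Pi(X,L,L)$ connecting its two asymptotic chords, so the differential preserves the path component $\mathfrak{p}\in \pi_0(\Pi(X,L,L))$, and the generators with $\mathfrak{p}=0$ form a $\Z[\pi_1(L)]$-submodule stable under $d$. The hypothesis that both the symplectic area class and the Maslov class vanish on $\pi_2(X,L)$ guarantees, as outlined in the text, that the action $\mathfrak{a}$ and a $\Z$-grading are defined independently of the auxiliary capping discs; in particular, the subcomplex has a genuine $\Z$-grading and a well-defined action filtration over which $d$ is positive.

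The central step is the bifurcation analysis itself. Via Proposition~\ref{prp:naturality}, I pass to the equivalent picture of $\widetilde{J}_t$-holomorphic strips with boundary on $(\phi^1_{H_t}(L), L)$, where Sullivan's framework directly applies. Along a generic homotopy in $s$ the chain complex is literally constant except at finitely many singular values of $s$: handle-slide moments, where a broken strip between two intersection points produces an elementary row/column operation in the basis, and birth/death moments, where a pair of intersection points is created or destroyed through a local cusp. Handle-slide moves automatically preserve the contractible subcomplex since the two generators involved lie in the same path component. At a birth/death moment the two new intersection points appear $C^0$-close on $L$, and the short arc joining them extends the associated Hamiltonian chords into homotopic paths in $\Pi(X,L,L)$; hence the pair lies in a single component of path space. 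Consequently the pair is either entirely inside $\mathfrak{p}=0$, contributing a stabilization/destabilization of the subcomplex in the sense of simple homotopy theory, or entirely outside it, leaving the subcomplex unchanged.

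The main obstacle I expect is the contractibility bookkeeping in the last step: one must arrange the capping-path and capping-disc choices continuously along the family, so that the row/column operations at handle-slides have coefficients genuinely in $\Z[\pi_1(L)]$ (rather than in some extension that would only be a monoid), and so that a newborn pair of generators really contributes a $2\times 2$ block with a $\pm 1$ off-diagonal entry encoding an elementary expansion. This requires adapting Sullivan's trajectory-counting arguments to track the $\pi_1(L)$-class through the breaking analysis and to verify action-positivity at the cusp. Once this is in place, each singular moment corresponds to an elementary simple-homotopy move on $CF^{\mathfrak{p}=0}_\bullet$, and concatenating them yields a simple homotopy equivalence between $CF^{\mathfrak{p}=0}_\bullet(L,L;H_t,J_t)$ and $(CM_\bullet(L,f;\Z[\pi_1(L)]),\partial_f)$, as required.
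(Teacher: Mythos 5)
Your proposal follows essentially the same route as the paper: deform to a $C^2$-small Hamiltonian where Floer's original computation yields the twisted Morse complex, translate to the intersection-point picture via Proposition~\ref{prp:naturality} so that Sullivan's bifurcation analysis applies, and observe that handle-slides and birth/death pairs respect the decomposition by components of $\Pi(X,L,L)$, so all elementary simple-homotopy moves restrict to the contractible-chord subcomplex. The argument is correct and matches the paper's proof, with your discussion of capping-path bookkeeping being a reasonable elaboration of a point the paper leaves implicit.
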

\begin{proof}
Similarly to the proof of \cite[Theorem 2.8]{DRGSM}, the main technical ingredient is the invariance in terms of bifurcation analysis as established by Sullivan in \cite{KTIFFH}. Since this formulation concerns a setup of the Floer complex where the Hamiltonian perturbation term vanishes, we must first translate this setup to that considered here. Then we adapted the technique to the Floer complex consisting of only the contractible chords. 

More precisely, we consider a generic `sequence' of complexes
$$CF_\bullet^{\mathfrak{p}}(L,L;G_{s,t},J_{s,t}), \:\: s \in [0,1],$$
$$G_{0,t} \equiv 0, \:\:G_{1,t}=H_t,$$
as above with $\mathfrak{p}=0$. For small $s>0,$ Floer's original calculation in \cite{MTFLI} provide the isomorphism with a Morse homology complex of $L$.

The statement for $s=1$ follows from the invariance in terms of
bifurcation analysis. Roughly speaking, this amounts to explaining
every change in the above sequences of complexes as a finite number
of births/deaths of intersection points together with handle-slides.
A handle-slide consists of a pseudoholomorphic strip connecting two
generators of index $-1$, which generically arise in a
one-parameter family of problems. On the algebraic side, a
birth/death corresponds to a stabilisation with an acyclic complex
consisting of two generators, while a handle-slide corresponds to a
simple isomorphism of complexes. Bifurcation analysis was first made
rigorous in \cite{KTIFFH}, and is there formulated in the setting
where the Hamiltonian perturbation term vanishes (i.e.~with a
differential that counts pseudoholomorphic strips). We must
therefore use Proposition \ref{prp:naturality} in order to translate
between the two viewpoints.

The same proposition also shows that this bifurcation analysis
indeed can be applied to the Floer complexes of contractible chords
only, as considered here, in order to provide well-defined chain
isomorphisms (after stabilisations). In particular, we note that
every pseudoholomorphic handle-slide strip with one of its
asymptotics being a contractible chord must have \emph{both} of its
asymptotics equal to contractible chords. The chain-isomorphisms
constructed by the bifurcation analysis are thus well-defined when restricted to the contractible chords.
\end{proof}
Then we apply the following result proven in
\cite{OTSMNOACM} by Damian.
\begin{thm}[\cite{OTSMNOACM}]\label{resdamadded}
Let $M$ be a closed smooth manifold. Any complex
$(C_\bullet,\partial_C)$ in the same simple homotopy equivalence
class as $(CM_\bullet(M,f;\Z[\pi_1(M)]),\partial_f)$ can be realised
as
\[(C_\bullet,\partial_C) = (CM_\bullet(M \times \R^k,F;\Z[\pi_1(M \times \R^k)]),\partial_F),\]
where $F \colon M \times \R^k \to \R$ is a Morse function almost quadratic at infinity for some $k \ge 0$.
\end{thm}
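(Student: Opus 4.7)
My plan is to construct $F$ by starting from the canonical stabilisation of $f$ and then sequentially realising the elementary algebraic moves that generate simple homotopy equivalences. The idea is to translate each algebraic step into a geometric move on Morse functions of $M \times \R^k$ while remaining in the class of functions almost quadratic at infinity.

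First I would invoke the standard fact that, for finitely generated based free complexes over $\Z[\pi_1(M)]$, any simple homotopy equivalence factors as a finite composition of elementary moves of two types: (a) \emph{stabilisations} by a trivial acyclic summand $\Z[\pi_1(M)] \xrightarrow{\id} \Z[\pi_1(M)]$ inserted in some pair of consecutive degrees, and (b) \emph{elementary basis changes} $e_i \mapsto e_i + a \cdot e_j$ with $i \ne j$ and $a \in \Z[\pi_1(M)]$. The complex $(C_\bullet,\partial_C)$ can thus be reached from $(CM_\bullet(M,f;\Z[\pi_1(M)]),\partial_f)$ by finitely many such moves, and it suffices to realise each move geometrically.

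Next I would stabilise by passing to $M \times \R^k$ with $F_0(x,y) := f(x) + Q(y)$, where $Q$ is a non-degenerate quadratic form on $\R^k$. This function is tautologically almost quadratic at infinity, and a standard computation identifies its Morse complex with $(CM_\bullet(M,f;\Z[\pi_1(M)]),\partial_f)$ up to a grading shift by the index of $Q$ (using that $\pi_1(M \times \R^k)=\pi_1(M)$). Each algebraic stabilisation is then realised by a Cerf birth in a coordinate ball of $M\times \R^k$ that is disjoint from all existing critical points and from the region where $F$ agrees with a quadratic form; this yields a canceling pair of consecutive indices whose differential coefficient is $\pm 1$, matching the acyclic summand. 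Each elementary basis change is realised by a handle-slide: by Cerf theory, in a generic one-parameter family of functions almost quadratic at infinity joining the old and new function, one may arrange exactly one moment supporting a broken trajectory between two critical points of equal index, and the resulting change on the chain level is precisely $e_i \mapsto e_i + a \cdot e_j$ with $a$ determined by the homotopy class in $\pi_1(M)$ of the sliding trajectory, which can be chosen arbitrarily in $\pi_1(M)=\pi_1(M\times \R^k)$ after enlarging $k$ if needed.

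The main obstacle, which I expect to require the most care, is ensuring that the geometric realisation can be carried out while preserving the almost quadratic condition and achieving arbitrary coefficients in $\Z[\pi_1(M)]$ for the handle-slides. Preservation of the almost quadratic condition is handled by performing all perturbations inside a fixed compact subset of $M \times \R^k$ (outside of which $F$ coincides with $Q$ in the $\R^k$ factor), which is consistent with the Cerf birth and handle-slide constructions above. Achieving arbitrary group-ring coefficients uses the freedom to isotope the descending/ascending discs in the stable range $k \gg 0$, where the Whitney trick and transversality arguments eliminate all framing and intersection obstructions; this is precisely the flexibility that the stabilisation by $\R^k$ buys. Composing the finitely many realisations produces the desired Morse function $F$ on $M \times \R^k$ whose Morse complex, by construction, equals $(C_\bullet,\partial_C)$.
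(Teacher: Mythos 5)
First, a point of context: the paper does not prove this statement at all. Theorem~\ref{resdamadded} is quoted from Damian \cite{OTSMNOACM}, and the only original content surrounding it is the remark that Damian's construction, stated for complexes supported in degrees $0 \le i \le \dim M$, extends to complexes supported in $-N \le i \le N$ once $k$ is taken large (by using an indefinite quadratic form at infinity to shift degrees). Your sketch is therefore really a reconstruction of Damian's argument, and in outline it follows the same strategy: stabilise by a quadratic form, factor the simple homotopy equivalence into elementary algebraic moves, and realise each move by a Cerf birth or a handle slide supported in a compact region, using the stabilisation to obtain arbitrary coefficients in $\Z[\pi_1(M)]$.

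There is, however, a genuine gap in your factorisation claim and hence in the geometric realisation. A simple homotopy equivalence between $(CM_\bullet(M,f;\Z[\pi_1(M)]),\partial_f)$ and $(C_\bullet,\partial_C)$ does not in general factor into stabilisations and elementary basis changes alone: the correct statement is that the two complexes admit a \emph{common} stabilisation related by a simple isomorphism, so to land on $(C_\bullet,\partial_C)$ on the nose you must also destabilise, i.e.\ cancel pairs of critical points in adjacent indices whose incidence coefficient is $\pm 1$. (Your two moves only ever increase the number of generators, so they cannot reach a complex with fewer generators than $CM_\bullet(M,f)$ in some degree.) Geometrically this cancellation is the most delicate step: one must first apply the Whitney trick in $M \times \R^k$ to reduce the geometric number of connecting gradient trajectories to one before invoking the Morse cancellation lemma, and it is here, rather than in the handle slides, that the stabilisation $k \gg 0$ is truly indispensable. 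Two smaller omissions: the generators of simple isomorphisms also include the moves $e_i \mapsto \pm g \cdot e_i$ with $g \in \pi_1(M)$ (realised by changing the orientation and base path of a critical point), and a single handle slide only contributes a coefficient $\pm g$, so an arbitrary $a \in \Z[\pi_1(M)]$ requires a composition of slides; moreover, to avoid a grading shift one must either take $Q$ of index zero or account for the shift, which is precisely the point of the paper's remark following the theorem.
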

Note that Damian only considers the case of complexes supported in
degrees $0 \le i \le \dim M$. However, after choosing $k \gg 0$
above sufficiently large, the construction also works for a simple
homotopy equivalence supported in degrees $-N \le i \le N$ for some
arbitrary but fixed $N \ge 0$.

This finishes the proof of Theorem~\ref{thm:main}.

\subsection{Proof of Corollary~\ref{cor:main}}

Any Hamiltonian isotopy $\phi^t_{H_t}$ of $X$ induces a Hamiltonian isotopy
$$\Phi^t:=(\id_X,\phi^t_{H_t}) \colon (X \times X,\omega\oplus(-\omega)) \to (X \times X,\omega\oplus(-\omega))$$
generated by $-H_t \circ \OP{pr}_2,$ where $\OP{pr}_2 \colon X \times X \to X$ denotes the canonical projection to the second factor.

We can now apply Theorem~\ref{thm:main} to the image of the diagonal
$$\Delta \subset (X \times X,\omega\oplus(-\omega)),$$
which is a Lagrangian submanifold that is relatively spin. Note that there is a one-to-one correspondence between generic
fixed points of $\phi^1_{H_t}$ on $X,$ and transverse  intersection points of $\Delta$ and $\Phi^1(\Delta)$.

In addition, observe that the required properties of the Maslov
class and symplectic area class on $\pi_2(X \times X,\Delta)$ follow
by the symplectic asphericity of $(X,\omega)$. More precisely, first
we use the doubling trick. Any disc $(u,v):(D^2, \partial D^2)\to
(X\times X, \Delta)$ gives rise to a sphere  $w: S^2 \to X$ in the
following way. We present $w$ as a pair of discs $u,v:D^2\to X$
glued along the boundary by the identity map. Observe that
$$\langle\omega\oplus (-\omega), (u,v)\rangle=\langle\omega, w\rangle=0,$$
and hence $\langle\omega\oplus (-\omega), \pi_2(X\times X,
\Delta)\rangle=0$. Finally, the required property of the Maslov
class follows in a similar manner; see e.g.~\cite[Lemma 2.3]{FHOLICI}.

\section*{Acknowledgements}
We would like to thank Viktor Ginzburg, Basak Gurel, Kaoru Ono and
Thomas Kragh for the very helpful discussions and interest in our
work. In addition, the second author is grateful to Tohoku Forum for
Creativity, where the project has started, for the hospitality and
support in May 2016. The first author is supported by the grant KAW
2013.0321 from the Knut and Alice Wallenberg Foundation. The second
author is partially supported by the ERC Advanced Grant ``LDTBud''
and by the ERC Consolidator Grant 646649 ``SymplecticEinstein''.

\end{document}